\newcommand{\mbb}[1]{\mathbb{#1}}
\newcommand{\mbs}[1]{\boldsymbol{#1}}
\newcommand{\mcl}[1]{\mathcal{#1}}
\newcommand{\NN}{\mathbb{N}}
\newcommand{\rme}{\text{e}}
\newcommand{\rmi}{\text{i}}
\newcommand{\ud}{\,\textrm{d}}
\newcommand{\udd}{\textrm{d}}
\def\XXint#1#2#3{{\setbox0=\hbox{$#1{#2#3}{\int}$}
\vcenter{\hbox{$#2#3$}}\kern-.5\wd0}}
\newtheorem{thm}{Theorem}[section]
\newtheorem{lmm}[thm]{Lemma}
\theoremstyle{definition}
\newtheorem{dfn}[thm]{Definition}
\newtheorem*{ack}{Acknowledgments}
\theoremstyle{remark}
\newtheorem*{rem}{Remark}
\begin{document}


\title{Benjamini-Schramm convergence and limiting eigenvalue density of random matrices}

\date{}

\author{Sergio Andraus\\\footnotesize{Department of Physics, Chuo University, Tokyo 112-8551, Japan}}

\maketitle

\begin{abstract}
We review the application of the notion of local convergence on locally finite randomly rooted graphs, known as Benjamini-Schramm convergence, to the calculation of the global eigenvalue density of random matrices from the $\beta$-Gaussian and $\beta$-Laguerre ensembles. By regarding a random matrix as the weighted adjacency matrix of a graph, and choosing the root of such a graph with uniform probability, one can use the Benjamini-Schramm limit to produce the spectral measure of the adjacency operator of the limiting graph. We illustrate how the Wigner semicircle law and the Marchenko-Pastur law are obtained from this machinery.
\end{abstract}

\section{Introduction}\label{sec:intro}
The one-point density of the eigenvalues of random matrices from the Gaussian and Wishart ensembles, given by the Wigner semicircle and Marchenko-Pastur laws respectively, are well-known objects in random matrix theory, and they have been derived by several different methods (see, e.g., \cite{mehta,forrester}), most notably the orthogonal polynomial method and the method of eigenfunction expansions in the Coulomb gas analogy. These methods rely on the direct calculation of eigenvalue densities for matrices of finite size, after which the infinite size limit is taken and the (one-point) eigenvalue density is recovered from the dominant-order quantities.

In this review, we focus on a different approach: we illustrate how to use the Benjamini-Schramm convergence on randomly rooted locally finite graphs \cite{benjaminischramm} to obtain an object which contains the information of the eigenvalue density of the random matrix ensemble in question \textit{after} taking the infinite-size limit. The Benjamini-Schramm convergence was initially developed with the goal of proving that, if one considers a random walk on a finite graph which has a randomly rooted locally finite limiting graph, the random walk on the limiting graph is recurrent. However, this notion of convergence can be used to study the behavior of other quantities related to the limiting graph, such as its adjacency operator and its eigenvalues.

The connection between the Benjamini-Schramm convergence and random matrix theory comes from regarding any one particular ensemble of random matrices as a set of adjacency operators on graphs. In particular, the $\beta$-ensembles introduced by Dumitriu and Edelman \cite{dumitriuedelman} and Edelman and Sutton \cite{edelmansutton}, which extend the classical (threefold) random matrix ensembles from the discrete parameter $\beta=$ 1, 2 or 4 to $\beta$ real and positive, are sparse matrices with finite entries almost surely. These two properties (sparsity and boundedness of entries) turn out to be critical in the application of the method reviewed here. Once one considers the graph represented by the matrix ensemble in question, one can take the Benjamini-Schramm limit to obtain the randomly rooted limiting graph and calculate the spectral measure of its adjacency operator, and subsequently the eigenvalue density of the random matrix ensemble in question by using the results in \cite{abertthomvirag,ViragICM2014}.

We review the definition of the Benjamini-Schramm convergence in Sec.~2, we study the adjacency operator and its spectral measure in Sec.~3, and we illustrate the cases of the $\beta$-Hermite and $\beta$-Laguerre ensembles in Sec.~4. We offer a few concluding remarks in Sec.~5. This review is based on notes taken during lectures given by B. Vir{\'a}g at the Les Houches Physics School during the July 2015 summer school: Stochastic Processes and Random Matrices.

\section{The Benjamini-Schramm convergence}\label{sec:BSconvergence}

Following \cite{benjaminischramm}, we consider the set of connected graphs $G=(V,E)$, and we define rooted graphs as ordered pairs $(G,o)$ where the vertex $o\in V$ is the root. We define the space of isomorphism classes of rooted, connected, and locally finite graphs of maximum degree $D$ by $\mathcal{RG}_D$. This means that every vertex $v\in V$ of a rooted graph $(G,o)\in\mathcal{RG}_D$ has a finite number of neighbors, but the number of vertices in the graph may be infinite. Consider the locally finite rooted graphs $(G,o)$ and $(G^\prime,o^\prime)$. Then, we can define the metric
\begin{equation}\label{distancebtwgraphs}
d[(G,o),(G^\prime,o^\prime)]:=2^{-k[(G,o),(G^\prime,o^\prime)]},
\end{equation}
where
\begin{equation}
k[(G,o),(G^\prime,o^\prime)]:=\sup\{r\in\NN_0:B_r(G,o)\simeq B_r(G^\prime,o^\prime)\}
\end{equation}
and $B_r(G,o)$ is the subgraph of radius $r$ around the root $o$ of $G$. If the two rooted graphs are isomorphic, then $k$ tends to infinity and $d[(G,o),(G^\prime,o^\prime)]=0$. We see that $\mathcal{RG}_D$ is compact under the metric $d[(G,o),(G^\prime,o^\prime)]$.

We consider the random rooted graphs obtained by choosing the root $o$ with uniform probability from the vertices of $G$. Thanks to the metric in Eq.~\eqref{distancebtwgraphs} and the compactness of $\mathcal{RG}_D$, we can define probability measures on $\mathcal{RG}_D$. Consider a Borel set $\mathcal{B}\subset\mathcal{RG}_D$. Then, for the random graphs we consider, we denote the probability that $(G,o)\in\mathcal{B}$ with $o\in V$ chosen randomly uniformly by $\nu_G(\mcl{B})$. Then, $\nu_G(\{(G,u)\})=\nu_G(\{(G,v)\})$ for all $u,v\in V$ by definition. Then, in the case where $G$ is a finite graph, and denoting by $\sharp V$ the number of vertices in $G$,
\begin{equation}
\nu_G(\{(G,v)\})=\frac{1}{\sharp V}
\end{equation}
for all $v\in V$. When $G$ is infinite, its vertices are labeled with a continuous parameter $r\in[0,1]$, say, and $\nu_G(\{(G,v_r)_{r\in R}\})$ is the Lebesgue measure of the set $R\subset [0,1]$. If we write $\mcl{B}=\{(H_j,v_j)\}_j$, $\nu_G(\mcl{B})$ is given by
\begin{equation}
\nu_G(\mcl{B})=\mbs{1}[G\in\{H_j\}_j]\int_{r:(G,v_r)\in\mcl{B}}\nu_G(\{(G,v_r)\})\ud r,
\end{equation}
where $\mbs{1}[\cdot]$ is the indicator function. The probability that $(G,o)\in\mcl{B}$ is zero when no graph in $\mcl{B}$ is isomorphic to $G$, and the integral represents the fraction of all the random rooted graphs obtained from $G$ which lie in $\mcl{B}$. We denote by $\mbb{P}_G$ the probability law with respect to $\nu_G(\mcl{B})$, and its expectation will be denoted by $\mbb{E}_G$. We are now ready to give the definition of the Benjamini-Schramm convergence.

\begin{dfn}[Benjamini-Schramm convergence]\label{bsconvergence}
Consider the sequence of rooted graphs $\{(G_n,o_n)\}_{n=0}^\infty$, with roots chosen randomly with uniform probability. The rooted random graph $(G,o)$ is the \textbf{distributional limit} of the sequence if for every $r > 0$ and every finite rooted graph $(H,o^\prime)$,
\begin{equation*}
\mathbb{P}_{G_n}[(H,o^\prime)\simeq (B_r(G_n,o_n),o_n)]\stackrel{j\to\infty}{\longrightarrow} \mathbb{P}_{G}[(H,o^\prime)\simeq (B_r(G,o),o)],
\end{equation*}
that is, if the law of $(G_n,o_n)$ tends locally weakly to the law of $(G,o)$ as $j\to\infty$.
\end{dfn}

\section{The adjacency operator and its spectral measure}

Let us continue by considering the adjacency operator $A$ of a locally finite graph $G=(V,E)$. This is an operator defined on the space $\mathscr{L}^2(G)$ of complex, square-summable functions $f$ on the vertex set $V$, for which we define the inner product
\begin{equation}
\langle f,g\rangle=\sum_{v\in V}\bar{f}(v)g(v).
\end{equation}
That is, $A: \mathscr{L}^2(G)\to\mathscr{L}^2(G)$, and its action on the function $f\in\mathscr{L}^2(G)$ is
\begin{equation}
[Af](v)=\sum_{u:(v,u)\in E} l((v,u))f(u),
\end{equation}
where $l((v,u))$ denotes the weight of the edge $(v,u)$ connecting the vertices $v$ and $u$. From our previous assumptions, the number of edges connected to any one vertex with nonzero weight is bounded. In addition, we require that the weights be symmetric, i.e., $l((v,u))=l((u,v))$, real and bounded in absolute value. We denote the bound on the weights by $M_w$. Under these conditions, it follows that the adjacency operator is bounded, and by the spectral theorem there exists an orthonormal basis of $\mathscr{L}^2(G)$, which we denote by $\{e_r\}_r$, such that
\begin{equation}
Ae_r(v)=\lambda_re_r(v)
\end{equation}
and with $\lambda_r\in \mbb{R}$. 

Note that the adjacency operator only depends on the graph $G$, and does not depend on the choice of a root $o$. However, the Benjamini-Schramm limit is taken with respect to the measure of a limiting random rooted graph, and for that reason we introduce the spectral measure of the adjacency operator with respect to the root. Denote the projection operator-valued measure on Borel sets $X\subset \mbb{R}$ acting on $f\in\mathscr{L}^2(G)$ by $[P_Xf](v)$; the characteristic function $\chi_o(v)$ is equal to one if $o=v\in V$ and zero otherwise.
\begin{dfn}\label{spectralmeasure}
The spectral measure of $A$ with respect to the root $o$ is defined as
\begin{equation}
\mu_{G,o}(X):=\sum_{v\in V}\overline{[P_X\chi_o]}(v)\chi_o(v)=\langle P_X\chi_o,\chi_o\rangle.
\end{equation}
\end{dfn}
One can recover the spectral measure of $A$ from this expression if $G_n$ is a finite graph with $n$ vertices. Indeed, $\mu_{G_n,o}(X)$ is given by
\begin{eqnarray}
\sum_{v\in V}\overline{[P_X\chi_o]}(v)\chi_o(v)&=&\sum_{v\in V}\sum_{u\in V}\sum_{m=1}^n\mbs{1}[\lambda_m\in X]\bar{e}_m(u)\chi_o(u)e_m(v)\chi_o(v)\\
&=&\sum_{m=1}^n\mbs{1}[\lambda_m\in X]|e_m(o)|^2,
\end{eqnarray}
and the expected measure $\mu_{G_n}$, which we define as the expectation with respect to $G_n$ of the spectral measure at $o$, is given by
\begin{equation}\label{finitespectralmeasure}
\mu_{G_n}(X):=\mbb{E}_{G_n}[\mu_{G_n,o}(X)]=\sum_{o\in V}\mu_{G_n,o}(X)\nu_{G_n}(\{(G_n,o)\})=\frac{1}{n}\sum_{m=1}^n\mbs{1}[\lambda_m\in X]
\end{equation}
due to the orthonormality of the basis $\{e_m\}_{m=1}^n$ and because $\nu_{G_n}(\{(G_n,o)\})=1/n$. This is the spectral measure of $A$.

When $G$ is infinite, however, the last expression in Eq.~\eqref{finitespectralmeasure} may not be well-defined. Therefore, Def.~\ref{spectralmeasure} is useful in making sense of the spectral measure of $A$ when $G$ is infinite. In fact, if $G$ is the limit of a sequence of finite random rooted graphs (in this case, we say that $G$ is sofic), the expected measure of $\mu_{G,o}(X)$ is the spectral measure of $A$. Ab{\'e}rt, Thom and Vir{\'a}g \cite{abertthomvirag} have proved the pointwise convergence of the expected measure, and we give its statement as follows.
\begin{lmm}\label{pointwiseconvergence}
Let $(G,o)$ be a sofic random rooted graph, and let $\{(G_n,o_n)\}_{n=1}^\infty$ be a sequence of finite random rooted graphs converging to $(G,o)$. Then, $\mu_{G_n}(\{x\})$ converges to $\mu_{G}(\{x\})$ for every $x\in\mbb{R}$.
\end{lmm}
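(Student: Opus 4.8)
The plan is to bracket $\mu_{G_n}(\{x\})$ between a soft upper bound that follows from weak convergence and a harder lower bound, and then to match the two.

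First I would pass to weak convergence of the expected measures by the method of moments. From Def.~\ref{spectralmeasure}, the $k$-th moment of the spectral measure at the root is
\begin{equation*}
\int_{\mbb{R}} t^k \ud\mu_{G,o}(t) = \langle A^k \chi_o, \chi_o\rangle,
\end{equation*}
a weighted count of closed walks of length $k$ based at $o$. Any such walk stays inside the ball $B_{\lfloor k/2\rfloor}(G,o)$, so this quantity is a bounded functional depending continuously on the weighted isomorphism type of a fixed finite neighborhood of the root; in fact it is a polynomial in the finitely many edge weights of that ball. Taking the expectation over the root and invoking Benjamini-Schramm convergence (Def.~\ref{bsconvergence}) in its marked, weighted form, the expected moments of $\mu_{G_n}$ converge to those of $\mu_G$ for every $k$. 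Since $|l|\le M_w$ and degrees are at most $D$, the operator norm satisfies $\|A\|\le D M_w$, so all the measures live in the common compact interval $[-D M_w, D M_w]$; on a fixed compact set convergence of all moments forces weak convergence, whence $\mu_{G_n}\to\mu_G$ weakly.

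The easy half of the conclusion is then immediate from the portmanteau theorem: the singleton $\{x\}$ is closed, so weak convergence yields
\begin{equation*}
\limsup_{n\to\infty}\mu_{G_n}(\{x\}) \le \mu_G(\{x\}).
\end{equation*}

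The hard part will be the reverse inequality $\liminf_n \mu_{G_n}(\{x\})\ge\mu_G(\{x\})$, and here weak convergence alone is powerless, since spectral mass sitting in a shrinking punctured neighborhood of $x$ could collapse onto $x$ in the limit and manufacture an atom that none of the $\mu_{G_n}$ possess (as in $\delta_{1/n}\to\delta_0$). The clean route is through the Stieltjes transforms $m_n(z)=\int_{\mbb{R}}(t-z)^{-1}\ud\mu_{G_n}(t)$, which converge to $m(z)=\int_{\mbb{R}}(t-z)^{-1}\ud\mu_G(t)$ locally uniformly on $\mbb{C}\setminus\mbb{R}$ by the moment convergence above. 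Each atom is recovered as $\mu_G(\{x\})=\lim_{\eta\downarrow0}\eta\,\mathrm{Im}\,m(x+\rmi\eta)$, and likewise for the $\mu_{G_n}$, so the desired equality of atoms is exactly the assertion that the limits $n\to\infty$ and $\eta\downarrow0$ commute. Justifying this interchange requires a uniform-in-$n$ estimate showing that $\mu_{G_n}\big([x-\eta,x+\eta]\setminus\{x\}\big)$ is vanishingly small as $\eta\downarrow0$; this no-escape statement, an analogue of Lück's approximation theorem adapted to the Benjamini-Schramm limit, is precisely the content supplied by Ab\'ert, Thom and Vir\'ag.

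The principal obstacle is thus this double-limit interchange, which cannot be closed by soft measure-theoretic arguments and instead rests on the uniform integrability of $(t-x)^{-1}$-type singularities against $\mu_{G_n}$ near $t=x$; this is the analytic heart of the proof, whereas the moment computation, the passage to weak convergence, and the portmanteau bound are routine once it is in hand. I would note finally that for the continuous ensembles of Sec.~4 the limit $\mu_G$ is atomless, so there the lemma degenerates to its easy half and no such estimate is needed.
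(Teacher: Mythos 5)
The paper offers no proof to compare against: it states this lemma as the theorem of Ab\'ert, Thom and Vir\'ag and explicitly omits the argument. Judged on its own terms, your proposal genuinely establishes only the easy half. The moment computation $\int t^k\ud\mu_{G,o}(t)=\langle A^k\chi_o,\chi_o\rangle$, the locality of closed walks, the uniform bound $\|A\|\le DM_w$ forcing all measures into a common compact interval, and the portmanteau inequality $\limsup_n\mu_{G_n}(\{x\})\le\mu_G(\{x\})$ are all correct, and your caveat that Def.~\ref{bsconvergence} must be read in its weighted (marked) form is well taken, since the paper states it only for unweighted isomorphism classes.

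The gap is the liminf direction, and it is not merely left technical --- it is begged. Your Stieltjes-transform reduction correctly reformulates the problem as the interchange of $n\to\infty$ with $\eta\downarrow 0$, equivalent to a uniform-in-$n$ bound on $\mu_{G_n}\bigl([x-\eta,x+\eta]\setminus\{x\}\bigr)$, but you then declare that bound to be ``precisely the content supplied by Ab\'ert, Thom and Vir\'ag.'' Since the lemma being proved \emph{is} their theorem, citing them for the single step that carries all the difficulty is circular; the proposal is a restatement of the theorem plus its easy converse, not a proof. Two further cautions: first, the difficulty is not well described as ``uniform integrability'' of the $(t-x)^{-1}$ singularity --- no soft analytic estimate of that kind is available, and L\"uck-approximation arguments of this type classically inject arithmetic input (integrality of adjacency entries, giving determinant lower bounds and hence log-determinant control near an atom), which is exactly what is not obvious for the real random edge weights appearing in Sec.~4. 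Second, your closing observation is the one point of real use for this paper: in the $\beta$-Hermite and $\beta$-Laguerre applications the limiting measures are atomless, so weak convergence --- the half you did prove --- suffices to identify them, and the hard half of the lemma is never actually exercised.
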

We omit the proof for brevity. We finish this section by listing the steps for finding the spectral measure of the adjacency operator of the limiting graph:
\begin{enumerate}
\item
Start with a sequence of Benjamini-Schramm converging graphs with adjacency operators that satisfy the requirements of symmetry, bounded degree and bounded edge weights.
\item
Calculate the eigenfunctions and eigenvalues of the limiting adjacency operator with respect to the random root.
\item
Calculate the spectral measure with respect to the random root.
\item
Take the expectation over the roots to obtain the spectral density of the limiting adjacency operator.
\end{enumerate}

\section{The $\beta$-Hermite and $\beta$-Laguerre cases}

Let us now consider random matrices of size $n\times n$ from the $\beta$-Hermite (or Gaussian) and $\beta$-Laguerre ensembles \cite{dumitriuedelman}. These are tridiagonal matrices given, in the $\beta$-Hermite case, by the random entries
\begin{equation}
\begin{array}{ccl}
a_j^\text{H}&\sim&\mcl{N}(0,2)/\sqrt{n},\\
b_j^\text{H}&\sim&\chi_{\beta j}/\sqrt{n},
\end{array}\quad 1\leq j\leq n,
\end{equation}
and a matrix $H_\beta^{(n)}$ from the ensemble is given by
\begin{equation}
H_\beta^{(n)}=\left(\begin{array}{cccc}
a_1^\text{H}&b_1^\text{H}&&\\
b_1^\text{H}&a_2^\text{H}&\ddots&\\
&\ddots&\ddots&b_{n-1}^\text{H}\\
&&b_{n-1}^\text{H}&a_n^\text{H}
\end{array}\right)=:\text{Tridiag}_n(\{a_j^\text{H}\}_{j=1}^n,\{b_j^\text{H}\}_{j=1}^{n-1}).
\end{equation}
In the case of the $\beta$-Laguerre ensemble, given the parameters $\gamma\geq 1$ and $\alpha:=\beta\gamma(n-1)/2$, a matrix $L_\beta^{(n)}$ from the ensemble is given by
\begin{equation}
L_\beta^{(n)}=\text{Tridiag}_n(\{a_j^\text{L}\}_{j=0}^{n-1},\{b_j^\text{L}\}_{j=1}^{n-1}),\ \begin{array}{ccl}
a_0^\text{L}&\sim&\chi^2_{2\alpha}/n,\\
a_j^\text{L}&\sim&\chi^2_{2\alpha+\beta(n-2j)}/n,\\
b_j^\text{L}&\sim&\chi_{2\alpha-\beta(j-1)}\chi_{\beta(n-j)}/n.
\end{array}
\end{equation}

Because they are tridiagonal, these matrices represent a graph in which each vertex is connected to two neighbors through edges with random weight $b_j$, and to itself through an edge with weight $a_j$, as depicted in Fig.~\ref{fig:beforebslimit}. 
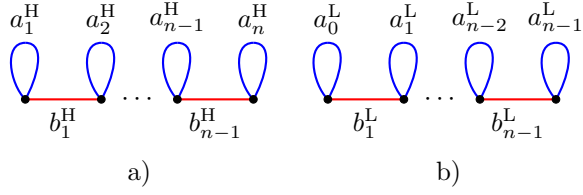
\begin{figure}
\begin{equation*}
\begin{array}{cc}
\begin{tikzpicture}
\node [above] at (-1.5,0.75) {$a_1^\text{H}$};
\draw [-, blue, thick] (-1.5,0) to [out=60,in=0] (-1.5,0.75);
\draw [-, blue, thick] (-1.5,0.75) to [out=180,in=135] (-1.5,0);
\draw [-, red, thick] (-1.5,0) -- (-0.5,0);
\draw [fill] (-1.5,0) circle [radius=0.05]; 
\node [below] at (-1,0) {$b_1^\text{H}$};
\node [above] at (-0.5,0.75) {$a_2^\text{H}$};
\draw [-, blue, thick] (-0.5,0) to [out=60,in=0] (-0.5,0.75);
\draw [-, blue, thick] (-0.5,0.75) to [out=180,in=135] (-0.5,0);
\draw [fill] (-0.5,0) circle [radius=0.05]; 
\node at (0,0) {$\cdots$};
\node [above] at (0.5,0.75) {$a_{n-1}^\text{H}$};
\draw [-, blue, thick] (0.5,0) to [out=60,in=0] (0.5,0.75);
\draw [-, blue, thick] (0.5,0.75) to [out=180,in=135] (0.5,0);
\draw [-, red, thick] (0.5,0) -- (1.5,0);
\draw [fill] (0.5,0) circle [radius=0.05]; 
\node [below] at (1,0) {$b_{n-1}^\text{H}$};
\node [above] at (1.5,0.75) {$a_{n}^\text{H}$};
\draw [-, blue, thick] (1.5,0) to [out=60,in=0] (1.5,0.75);
\draw [-, blue, thick] (1.5,0.75) to [out=180,in=135] (1.5,0);
\draw [fill] (1.5,0) circle [radius=0.05];
\end{tikzpicture}&
\begin{tikzpicture}
\node [above] at (-1.5,0.75) {$a_0^\text{L}$};
\draw [-, blue, thick] (-1.5,0) to [out=60,in=0] (-1.5,0.75);
\draw [-, blue, thick] (-1.5,0.75) to [out=180,in=135] (-1.5,0);
\draw [-, red, thick] (-1.5,0) -- (-0.5,0);
\draw [fill] (-1.5,0) circle [radius=0.05]; 
\node [below] at (-1,0) {$b_1^\text{L}$};
\node [above] at (-0.5,0.75) {$a_1^\text{L}$};
\draw [-, blue, thick] (-0.5,0) to [out=60,in=0] (-0.5,0.75);
\draw [-, blue, thick] (-0.5,0.75) to [out=180,in=135] (-0.5,0);
\draw [fill] (-0.5,0) circle [radius=0.05]; 
\node at (0,0) {$\cdots$};
\node [above] at (0.5,0.75) {$a_{n-2}^\text{L}$};
\draw [-, blue, thick] (0.5,0) to [out=60,in=0] (0.5,0.75);
\draw [-, blue, thick] (0.5,0.75) to [out=180,in=135] (0.5,0);
\draw [-, red, thick] (0.5,0) -- (1.5,0);
\draw [fill] (0.5,0) circle [radius=0.05]; 
\node [below] at (1,0) {$b_{n-1}^\text{L}$};
\node [above] at (1.5,0.75) {$a_{n-1}^\text{L}$};
\draw [-, blue, thick] (1.5,0) to [out=60,in=0] (1.5,0.75);
\draw [-, blue, thick] (1.5,0.75) to [out=180,in=135] (1.5,0);
\draw [fill] (1.5,0) circle [radius=0.05];
\end{tikzpicture}\\
\text{a)}&\text{b)}
\end{array}
\end{equation*}
\caption{\label{fig:beforebslimit}Graphs corresponding a) to the $\beta$-Hermite and b) to the $\beta$-Laguerre ensembles before taking the Benjamini-Schramm limit.}
\end{figure}

We present the result of using the procedure outlined in the previous section on these ensembles in the following two theorems.
\begin{thm}\label{hermitecase}
The sequence of random rooted graphs obtained from the adjacency matrix $\{H_\beta^{(n)}\}_{n=1}^\infty$ is Benjamini-Schramm convergent in the limit $n\to\infty$, and the expected measure of the limiting adjacency operator is given by
\begin{equation}\label{wignersemicircle}
\mu_{H}(\udd x)=\mbs{1}[x\in[-2\sqrt{\beta},2\sqrt{\beta}]]\frac{\sqrt{4\beta-x^2}}{2\pi\beta}\ud x.
\end{equation}
\end{thm}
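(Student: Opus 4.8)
The plan is to realize each $H_\beta^{(n)}$ as the weighted adjacency matrix of a path on $n$ vertices carrying self-loops, and to follow the four-step recipe from Section~3. First I would record the local structure: vertex $j$ carries a self-loop of weight $a_j^{\text{H}}\sim\mcl{N}(0,2)/\sqrt n$ and is joined to $j\pm1$ by edges of weight $b_j^{\text{H}}\sim\chi_{\beta j}/\sqrt n$. I would then verify the hypotheses needed to invoke the spectral machinery: the matrix is symmetric, every vertex has degree at most three (two neighbors plus a loop), and the weights are bounded in absolute value with probability tending to one. Since $\chi_{\beta j}$ is itself unbounded, this is where the first technical care is required; one controls $\max_{j\le n} b_j^{\text{H}}$ by a union bound with standard $\chi$-tail estimates, producing the uniform bound $M_w$ so that the operators stay uniformly bounded.

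Next I would identify the Benjamini-Schramm limit, working with the edge-marked version of the convergence in which ball isomorphisms are required to match edge weights up to a vanishing error. Choosing the root uniformly amounts to choosing an index $j$ uniformly, so that $t:=j/n$ is asymptotically uniform on $[0,1]$; with probability tending to one the root lies in the bulk, so that $B_r(H_\beta^{(n)},o_n)$ is a genuine path segment and not a boundary piece. Within such a ball the indices range over $\{j-r,\dots,j+r\}$, an interval of width $2r$ around $tn$. Using $\mbb{E}[\chi_m]=\sqrt m\,(1+o(1))$ and $\mnm{Var}(\chi_m)=O(1)$, I would show that $b_{j+k}^{\text{H}}=\chi_{\beta(j+k)}/\sqrt n\to\sqrt{\beta t}$ in probability, uniformly for $|k|\le r$, while $a_{j+k}^{\text{H}}\to0$. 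Hence the distributional limit, conditioned on $t$, is the doubly-infinite line $\mbb{Z}$ with constant edge weight $c(t)=\sqrt{\beta t}$ and no loops, and the randomness of the root survives only through the law of $t$.

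I would then compute the eigenfunctions and the spectral measure of the limiting operator. On $\mathscr{L}^2(\mbb{Z})$ the operator acts by $[Af](k)=c\,[f(k-1)+f(k+1)]$, and passing to the Fourier variable $\theta\in[-\pi,\pi]$ diagonalizes it as multiplication by $2c\cos\theta$, with generalized eigenfunctions $e_\theta(k)=\rme^{\rmi k\theta}$. Since $\chi_o$ corresponds to the constant function under the Fourier transform, the spectral measure at the root is the pushforward of the normalized Lebesgue measure on $[-\pi,\pi]$ under $\theta\mapsto2c\cos\theta$, namely the arcsine law
\begin{equation*}
\mu_{G,o_t}(\udd\lambda)=\frac{\mbs{1}[|\lambda|<2\sqrt{\beta t}]}{\pi\sqrt{4\beta t-\lambda^2}}\ud\lambda.
\end{equation*}

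Finally I would take the expectation over the root. By Lemma~\ref{pointwiseconvergence} the expected measures $\mu_{H_\beta^{(n)}}$ converge pointwise to $\mu_H$, which is the average of the arcsine laws above over $t$ uniform on $[0,1]$:
\begin{equation*}
\mu_H(\udd\lambda)=\frac{1}{\pi}\left(\int_{\lambda^2/(4\beta)}^{1}\frac{\ud t}{\sqrt{4\beta t-\lambda^2}}\right)\ud\lambda,
\end{equation*}
the lower limit coming from the constraint $\lambda^2<4\beta t$. The substitution $u=4\beta t-\lambda^2$ evaluates the inner integral to $\sqrt{4\beta-\lambda^2}/(2\beta)$ on $|\lambda|<2\sqrt\beta$, reproducing Eq.~\eqref{wignersemicircle}. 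I expect the main obstacle to be the second step rather than the final computation: one must argue that the expectation over the root genuinely reduces to an integral over the limiting parameter $t$, i.e., that the slowly varying weights $b_j^{\text{H}}$ concentrate around $\sqrt{\beta t}$ on every fixed ball and that boundary roots are negligible, so that the \emph{mixture} of arcsine laws---and not a single one---is the correct limiting object.
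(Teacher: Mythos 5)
Your proposal is correct and follows essentially the same route as the paper: identify the Benjamini--Schramm limit as the line with constant edge weight $\sqrt{\beta t}$, $t\sim\mnm{Unif}[0,1]$ being the only surviving randomness, diagonalize the limiting adjacency operator in the Fourier basis to obtain the arcsine law $\mbs{1}[|\lambda|<2\sqrt{\beta t}]\,\bigl(\pi\sqrt{4\beta t-\lambda^2}\bigr)^{-1}\udd\lambda$ at the root, and average over $t$ to recover the semicircle. If anything, you supply technical details the paper glosses over (tail bounds giving a uniform weight bound $M_w$ with high probability, uniform concentration of the weights over fixed balls, and the negligibility of boundary roots), which only strengthens the same argument.
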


For the $\beta$-Laguerre ensemble, the limiting measure depends on the parameter $\gamma$ in the form of the quantities $L_\pm:=\beta(1\pm \sqrt{\gamma})^2$.
\begin{thm}\label{laguerrecase}
The sequence of random rooted graphs obtained from the adjacency matrix $\{L_\beta^{(n)}\}_{n=1}^\infty$ is Benjamini-Schramm convergent in the limit $n\to\infty$, and the expected measure of the limiting adjacency operator is given by
\begin{equation}\label{marchenkopastur}
\mu_L(\udd x)=\mbs{1}[x\in[L_-,L_+]]\frac{\sqrt{(x-L_-)(L_+-x)}}{2\pi \beta x}\ud x.
\end{equation}
\end{thm}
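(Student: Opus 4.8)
The plan is to carry out the four-step program of Section 3 explicitly for the weighted path graph associated to $L_\beta^{(n)}$, in complete parallel with the $\beta$-Hermite computation behind Theorem \ref{hermitecase}. First I would identify the Benjamini-Schramm limit. Choosing the root $o_n$ uniformly among the $n$ vertices $\{0,\dots,n-1\}$, the rescaled position $o_n/n$ converges in law to a variable $t$ uniform on $[0,1]$. For a fixed radius $r$ and a root away from the two endpoints (an event of probability $1-O(r/n)\to 1$), the ball $B_r(L_\beta^{(n)},o_n)$ is a weighted path on $2r+1$ vertices whose self-loop weights are $a^{\text L}_{o_n+i}$ and whose edge weights are $b^{\text L}_{o_n+i}$ for $|i|\le r$. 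Using the concentration of the $\chi$ and $\chi^2$ laws with parameters of order $n$ (so that $\chi^2_k/k\to 1$ and $\chi_k/\sqrt{k}\to 1$ in probability), each of these finitely many weights converges to the deterministic constants
\begin{equation*}
a(t)=\beta(\gamma+1-2t),\qquad b(t)=\beta\sqrt{(\gamma-t)(1-t)},
\end{equation*}
obtained by inserting $j=tn$ into the definitions and retaining the leading order in $n$. Hence the distributional limit is the random rooted graph $(\mbb{Z},0)$ carrying the constant self-loop weight $a(t)$ and the constant edge weight $b(t)$, with $t$ uniform on $[0,1]$; by construction this limit is sofic, and the same concentration estimates secure the uniform weight bound $M_w$ needed to invoke the spectral theorem and Lemma \ref{pointwiseconvergence}.

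Next I would diagonalize the limiting operator. For fixed $t$ the operator $[Af](k)=a(t)f(k)+b(t)\big(f(k+1)+f(k-1)\big)$ on $\mathscr{L}^2(\mbb{Z})$ is translation invariant, so the Fourier modes $e_\theta(k)=\rme^{\rmi k\theta}$ are generalized eigenfunctions with eigenvalue $\lambda(\theta)=a(t)+2b(t)\cos\theta$. Transporting $\chi_0$ (the constant function $1$ in Fourier variables) through the spectral theorem, the spectral measure at the root is the push-forward of $\ud\theta/2\pi$ under $\theta\mapsto\lambda(\theta)$, namely the arcsine-type density
\begin{equation*}
\mu_{G,o}^{(t)}(\ud x)=\frac{\mbs{1}\big[\,|x-a(t)|<2b(t)\,\big]}{\pi\sqrt{4b(t)^2-(x-a(t))^2}}\,\ud x .
\end{equation*}

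The final and decisive step is to average over the root, $\mu_L=\int_0^1\mu_{G,o}^{(t)}\,\ud t$. The key simplification is that the radicand is \emph{affine} in $t$: expanding $4b(t)^2-(x-a(t))^2$, the quadratic terms in $t$ cancel and one is left with $c(x)-4\beta x\,t$, where $c(x)=(L_+-x)(x-L_-)$ once one recognizes $L_\pm=\beta(1\pm\sqrt{\gamma})^2$ as the roots of $c$. Since moreover $c(x)-4\beta x=-(x-\beta(\gamma-1))^2\le 0$, for $x\in(L_-,L_+)$ the radicand is positive at $t=0$ and vanishes at $t^\ast=c(x)/(4\beta x)\le 1$, so the substitution $s=c(x)-4\beta x\,t$ turns the $t$-integral into $\tfrac{1}{4\pi\beta x}\int_0^{c(x)}s^{-1/2}\,\ud s=\sqrt{c(x)}/(2\pi\beta x)$, which is exactly the Marchenko-Pastur density \eqref{marchenkopastur}; for $x\notin(L_-,L_+)$ one has $c(x)\le 0$ and the integrand never contributes, giving support $[L_-,L_+]$.

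I expect the main obstacle to lie not in this clean final integral but in the first step: making the convergence of the random weights uniform over the vertices inside a typical ball, controlling the tails of the $\chi$ and $\chi^2$ variables so as to obtain the weight bound $M_w$ and genuine Benjamini-Schramm convergence (including the negligible edge effects near the endpoints, where the degrees of freedom can degenerate), together with the standard but nontrivial passage from this local convergence to convergence of the expected spectral measure $\mu_{G_n}$. The latter is most transparently handled by the method of moments, since the moments of $\mu_{G_n}$ are normalized traces counting weighted closed walks from the root and are therefore local quantities stabilized by the limit.
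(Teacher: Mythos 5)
Your proposal follows essentially the same route as the paper's proof: choose the root position $u\sim\mathrm{Unif}[0,1]$, use concentration of the $\chi$ and $\chi^2$ weights to get the limiting weighted path on $\mbb{Z}$ with $a(u)=\beta(\gamma+1-2u)$ and $b(u)=\beta\sqrt{(\gamma-u)(1-u)}$, diagonalize by Fourier modes to obtain the arcsine spectral measure at the root, and average over $u$. It is correct, and in fact slightly sharper than the paper at the last step: your cutoff $u\le (L_+-x)(x-L_-)/(4\beta x)$ is the right one (the paper's $l_\beta(x)$ omits the factor $4$, evidently a typo, since the final integral only produces the stated density with the factor $4$ present), and your observation that $c(x)-4\beta x=-(x-\beta(\gamma-1))^2\le 0$ cleanly justifies that the cutoff never exceeds $1$, a point the paper leaves implicit.
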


We present the proof of both statements in succession.
\begin{proof}[Proof of Thm.~\ref{hermitecase}]
We take the Benjamini-Schramm limit as follows. Denote the finite graph in Fig.~\ref{fig:beforebslimit}a) by $H_n$. The graph satisfies the assumptions of Def.~\ref{bsconvergence}. It suffices to show that the weights on the edges are finite as $n\to\infty$. Assume that we label the vertices with integers in $\{1,\ldots,n\}$ and that for every graph $H_n$ the root  is labeled $j_n$. Consider an integer sequence $\{j_n\}_{n=1}^\infty$ such that $j_n\to\infty$ and $j_n/n\to u\in[0,1]$. Then, choosing the root randomly uniformly is equivalent to setting $u\sim\text{Unif}[0,1]$. In the limit, we have
\begin{equation}
a_{j_n}^{H}\sim\frac{\mcl{N}(0,2)}{\sqrt{n}}\to 0,\quad b_{j_n}^{H}\sim\sqrt{\frac{\beta j_n}{n}}\frac{\chi_{\beta j_n}}{\sqrt{\beta j_n}}\stackrel{n\to\infty}{\longrightarrow}\sqrt{\beta u}
\end{equation}
almost surely as $n\to \infty$. The limit for $b_{j_n}^{H}$ follows from the properties of the moment generating function of the chi distribution. The limiting graph, which we denote by $H$, is depicted in Fig.~\ref{fig:afterbslimit}a).
\begin{figure}
\begin{equation*}
\begin{array}{cc}
\begin{tikzpicture}
\node at (-1,0) {$\cdots$};
\draw [-, red, thick] (-0.5,0) -- (0.5,0);
\draw [fill] (-0.5,0) circle [radius=0.05]; 
\node [below] at (0,0) {$\sqrt{\beta u}$};
\draw [fill] (0.5,0) circle [radius=0.05];
\node at (1,0) {$\cdots$};
\end{tikzpicture}&
\begin{tikzpicture}
\node at (-1,0) {$\cdots$};
\node [above left] at (-0.5,0.75) {$\beta(\gamma+1-2u)$};
\draw [-, blue, thick] (-0.5,0) to [out=60,in=0] (-0.5,0.75);
\draw [-, blue, thick] (-0.5,0.75) to [out=180,in=135] (-0.5,0);
\draw [-, red, thick] (-0.5,0) -- (0.5,0);
\draw [fill] (-0.5,0) circle [radius=0.05]; 
\node [below] at (0,0) {$\beta\sqrt{\gamma-u}\sqrt{1-u}$};
\node [above right] at (0.5,0.75) {$\beta(\gamma+1-2u)$};
\draw [-, blue, thick] (0.5,0) to [out=60,in=0] (0.5,0.75);
\draw [-, blue, thick] (0.5,0.75) to [out=180,in=135] (0.5,0);
\draw [fill] (0.5,0) circle [radius=0.05];
\node at (1,0) {$\cdots$};
\end{tikzpicture}\\
\text{a)}&\text{b)}
\end{array}
\end{equation*}
\caption{\label{fig:afterbslimit}Benjamini-Schramm limiting graphs for a) the $\beta$-Hermite and b) the $\beta$-Laguerre ensembles. }
\end{figure}
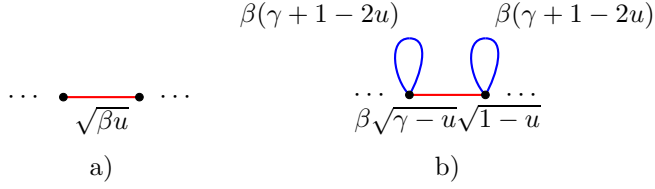
Note that $u$ indicates the section of the graph where the root was chosen, but the vertices are still labeled by integers. The action of the adjacency operator $A_H$ is given by
\begin{equation}
A_Hf(v)=\sqrt{\beta u}[f(v-1)+f(v+1)].
\end{equation}
This operator can be diagonalized by a Fourier basis, yielding the eigenvalues $\lambda_{u,\omega}^H$:
\begin{equation}\label{fourierbasis}
e_\omega(v)=\rme^{\rmi \omega v}/\sqrt{2\pi},\quad
\lambda_{u,\omega}^H=2\sqrt{\beta u}\cos(\omega).
\end{equation}
Here, $\rmi=\sqrt{-1}$ and $-\pi\leq \omega\leq\pi$. The next step is to calculate the spectral measure at $u$. From Def.~\ref{spectralmeasure} and Lemma~\ref{pointwiseconvergence}, and denoting the Dirac measure concentrated at $\lambda$ by $\delta_{\lambda}(X)$, $X\subset\mbb{R}$, we write
\begin{equation}
\mu_{H,u}(\udd x)=\int_{-\pi}^{\pi}\frac{1}{2\pi}\delta_{\lambda_{u,\omega}^H}(\udd x)\ud \omega=
\frac{\mbs{1} [x\in[-2\sqrt{\beta u},2\sqrt{\beta u}]]}{\pi \sqrt{4\beta u-x^2}}\ud x.
\end{equation}
Note that the measure is nonzero only when $u\geq x^2/(4 \beta)$ and that the singularities at $u= x^2/(4 \beta)$ pose no problem, because they are integrable. Finally, we take the expectation with respect to $u$. The result is
\begin{equation}
\mu_{H}(\udd x)=\int_0^1 \frac{\mbs{1} [x\in[-2\sqrt{\beta u},2\sqrt{\beta u}]]}{\pi \sqrt{4\beta u-x^2}} \ud u \ud x
=\int_{x^2/4\beta}^1\!\!\! \frac{\mbs{1} [x\in[-2\sqrt{\beta},2\sqrt{\beta}]]}{\pi \sqrt{4\beta u-x^2}}\ud u \ud x.
\end{equation}
Performing the integral yields the result.
\end{proof}
\begin{rem}
The measure $\mu_{H}(X)$ is the well-known Wigner semi-circle law. Note that $\beta$ is simply a scale factor; setting $y:=\sqrt{\beta}x$ in Eq.~\eqref{wignersemicircle} yields the semicircle law for $\beta=1$. This is evidence of the universality of the semicircle distribution.
\end{rem}

\begin{proof}[Proof of Thm.~\ref{laguerrecase}]
As in the previous proof, we denote the graph in Fig.~\ref{fig:beforebslimit}b) by $L_n$. We take the Benjamini-Schramm limit by choosing a root $j_n$ such that $j_n\to\infty$ and $j_n/n\to u\in[0,1]$ and set $u\sim\text{Unif}[0,1]$. Then, the weights on the edges converge to
\begin{equation}
a_{j_n}^L\sim \frac{2\alpha+\beta(n-2j_n)}{n}\frac{\chi^2_{2\alpha+\beta(n-2j_n)}}{2\alpha+\beta(n-2j_n)}\stackrel{n\to\infty}{\longrightarrow}\beta(\gamma+1-2u),
\end{equation}
and
\begin{eqnarray}
b_{j_n}^L&\sim& \sqrt{\frac{2\alpha-\beta(j_n-1)}{n}}\frac{\chi_{2\alpha-\beta(j_n-1)}}{\sqrt{2\alpha-\beta(j_n-1)}}\sqrt{\frac{\beta(n-j_n)}{n}}\frac{\chi_{\beta(n-j_n)}}{\sqrt{\beta(n-j_n)}}\nonumber\\
&\stackrel{n\to\infty}{\longrightarrow}&\beta\sqrt{\gamma-u}\sqrt{1-u}
\end{eqnarray}
almost surely, by the properties of the moment generating functions of the chi and chi-square distributions. The limiting graph, $L$, is shown in Fig.~\ref{fig:afterbslimit}b). The action of the adjacency operator $A_L$ is then given by
\begin{equation}
A_Lf(v)=\beta\sqrt{\gamma-u}\sqrt{1-u}[f(v-1)+f(v+1)]+\beta(\gamma+1-2u)f(v),
\end{equation}
and using the Fourier basis in Eq.~\eqref{fourierbasis} we find that the eigenvalues are given by
\begin{equation}
\lambda_{u,\omega}^L=c_1(u)+2c_2(u)\cos\omega,\ c_1(u)=\beta(\gamma+1-2u),\ c_2(u)=\beta\sqrt{\gamma-u}\sqrt{1-u}.
\end{equation}
The spectral measure at $u$ is given by
\begin{equation}
\mu_{L,u}(\udd x)=\frac{\mbs{1}[x-c_1(u)\in[-2c_2(u),2c_2(u)]]}{\sqrt{4c_2^2(u)-(x-c_1(u))^2}}\frac{\udd x}{\pi}.
\end{equation}
The argument in the indicator function comes from the fact that the measure must be zero if $x$ is not in the image of $\lambda_{u,\omega}^L$ for $-\pi\leq\omega\leq\pi$, that is, $x$ must be in the interval $[c_1(u)-2c_2(u),c_1(u)+2c_2(u)]$. This is equivalent to requiring that
\begin{equation}
u\leq [\beta(1+\sqrt{\gamma})^2/x-1][x/\beta-(1-\sqrt{\gamma})^2]=\frac{(L_+-x)(x-L_-)}{\beta x}=:l_\beta(x).
\end{equation}
Because $u\in[0,1]$, we must require that $l_\beta(x)$ be positive, and because $L_+>L_-\geq 0$ for $\gamma\geq 1$ we see that the measure is positive for $(L_+-x)(x-L_-)> 0$. Then, we write
\begin{equation}
\mu_L(\udd x)=\mbb{E}_L[\mu_{L,u}(\udd x)]=\int_0^{l_\beta(x)}\!\!\!\frac{\mbs{1}[x\in[L_-,L_+]}{\sqrt{2\beta x(\gamma+1)-\beta^2(\gamma-1)^2-4\beta x u -x^2}}\frac{\udd u}{\pi}\ud x.
\end{equation} 
By computing the integral, the claim is proved.
\end{proof}
\begin{rem}
In this case, we obtain the Marchenko-Pastur law. As before, $\beta$ is simply a scale factor, which is evidence of the universality of this distribution. This means that $\gamma$ dictates the shape of the distribution. Also, note that if $\beta=1$, the case we consider here, $\gamma\geq 1$, corresponds to the matrices from the Wishart ensembles given by $L_1=B_1 B_1^T$, where $B_1$ is a real, rectangular matrix with Gaussian-distributed entries and dimensions $n\times m$ with $m\geq n$. In other words, $L_\beta$ does not have a concentrated density of eigenvalues at zero almost surely. The case where $0<\gamma<1$ can be treated using the method presented here, but care must be taken in calculating the tridiagonal form. 
\end{rem}

\section{Concluding remarks}

Similar results to those illustrated here can be found for the $\beta$-Jacobi ensembles. If the matrices in question are sparse (i.e., the number of nonzero entries per row is bounded) and its entries themselves are bounded, the method shown here should be applicable. However, this requirement makes the use of the Benjamini-Schramm limit ineffective in treating problems such as finding the spectral measure in $\mbb C$ of the Ginibre ensemble, as it cannot be reduced into a manageable sparse matrix ensemble. The method itself is interesting, however, and we plan to find other applications for it in the future, such as the time evolution of the spectral measure of sparse matrix-valued stochastic processes.

\begin{ack}
The author would like to thank the organizers of the Probability Theory Symposium 2016 held at RIMS, Kyoto University, on Dec. 19-22 2016, where this work was presented. The author would also like to thank the organizers of the summer school on Random Matrices and Stochastic Processes at the Les Houches Physics School (July 2015), where this work was carried out in part, and B. Vir{\'a}g, for his enlightening lectures. Finally, the author would like to thank M. Katori for his careful reading of this manuscript. This work was supported in part by the Grant-in-Aid for Scientific Research (B) (No. 26287019) of the Japan Society for the Promotion of Science.
\end{ack}


\footnotesize{E-mail address: andraus@phys.chuo-u.ac.jp}
\end{document}